\def\Xint#1{\mathchoice
{\XXint\displaystyle\textstyle{#1}}%
{\XXint\textstyle\scriptstyle{#1}}%
{\XXint\scriptstyle\scriptscriptstyle{#1}}%
{\XXint\scriptscriptstyle\scriptscriptstyle{#1}}%
\!\int}
\def\XXint#1#2#3{{\setbox0=\hbox{$#1{#2#3}{\int}$ }
\vcenter{\hbox{$#2#3$ }}\kern-.6\wd0}}
\def\dashint{\Xint-}
\newtheorem{theorem}{Theorem}[section]
\newtheorem{prop}[theorem]{Proposition}
\newtheorem{lemma}[theorem]{Lemma}
\newtheorem{definition}[theorem]{Definition}
\newtheorem{openquestion[theorem]}{Question}
\newtheorem{conj}[theorem]{Conjecture}
\newcommand{\RR}{\mathbb{R}}
\newcommand{\NN}{\mathbb{N}}
\newcommand{\ZZ}{\mathbb{Z}}
\newcommand{\Ei}{\text{\normalfont Ei}}
\newcommand{\li}{\text{\normalfont li}}
\newsavebox\foobox
\newlength{\foodim}
\newcommand{\defeq}{\vcentcolon=}
\title[Asymptotics of the reciprocal Hardy-Littlewood integrals]{Asymptotic expansions for the reciprocal Hardy-Littlewood logarithmic integrals}
\author{Glenn Bruda}
\thanks{\textit{Email:} \texttt{\href{mailto:glenn.bruda@ufl.edu}{glenn.bruda@ufl.edu}}}
\date{\today}
\begin{document}

\begin{abstract}
      Defining a family of recurrences, we generalize Comtet's formula for the generating function of the enumeration of indecomposable permutations. Consequently, we generalize Panaitopol's asymptotic expansion for the prime counting function, obtaining asymptotic expansions salient to the first Hardy-Littlewood conjecture.
\end{abstract}

\maketitle

\section{Introduction}

Preliminarily, a \emph{permutation} $\sigma$ of length $n\in\ZZ^+$ is a bijection $\sigma:\{1,2,\dots,n\}\to\{1,2,\dots,n\}$. Given a permutation $\sigma$ of length $n$, the one-line notation of $\sigma$ is $m_1 m_2 \cdots m_n$, where each $m_i=\sigma(i)$. For example, $\sigma_1=12345$ and $\sigma_2=23451$ are permutations of length $5$, where $\sigma_1$ is the identity mapping and $\sigma_2(k)=k+1\pmod{5}$ for all $k\in\{1,2,3,4,5\}$. A \emph{proper prefix} of a permutation $\sigma$ of length $n$ is a restriction $\sigma|_{\{1,2,\dots,k\}}$ for some $1\leq k<n$. For example, the proper prefixes of $23451$ are $2345,234,23,2$.

As seen in \cite{indecomp_perms}, an \emph{indecomposable permutation}\footnote{Also known as a \emph{connected permutation} or an \emph{irreducible permutation} according to \cite{OEIS_indecomp}.} is a permutation such that no proper prefix of itself is a permutation. For example, $321,312,231$ are all indecomposable since $2,3,31,32,23$ are not permutations. On the other hand, $123,132,213$ are not indecomposable since $1,12,21$ are all permutations. A thorough introduction to indecomposable permutations also appears in \cite{indecomp_perms} and the OEIS sequence of their enumeration is seen at \cite{OEIS_indecomp}. 

All power series in this paper should be considered as {formal power series} over the integers. A \emph{formal power series} over a ring $R$ is an infinite series of monomials
\begin{align*}
    c_0+c_1x+c_2x^2+c_3x^3+\cdots
\end{align*}
such that each coefficient $c_i\in R$. An introduction to formal power series may be found in \cite{formal_power_series}.

Comtet introduced indecomposable permutations in \cite{comtet}, motivated by determining the coefficients of the reciprocal of $\sum_{n\geq0}n!x^n$. In particular, he found that
\begin{align*}
    1-\left(\sum_{n\geq0}n!x^n\right)^{-1} \ = \ \sum_{n\geq1}\mathcal{I}_n x^n,
\end{align*}
where $\mathcal{I}_n$ is the number of indecomposable permutations of length $n$. We generalize this result in Proposition \ref{comtet_generalization}, determining the coefficients of the reciprocal of $\sum_{n\geq0}(n+k-1)!x^n$ for any $k\in\ZZ^+$.

We use Proposition \ref{comtet_generalization} to determine the asymptotic behavior of the reciprocals of a class of integrals appearing in Conjecture \ref{hardy_littlewood_conj}. In particular, we obtain {asymptotic expansions} for said reciprocal functions.

\begin{definition}[Asymptotic expansion, {\cite[Section 2.2]{def_of_asymptotic_expansion}}]
    Let $\{\varphi_n\}_{n\in\NN}$ be a sequence of real-valued functions such that $\varphi_{n+1}(x)=o(\varphi_{n}(x))$ as $x\to\infty$ for every $n\in\NN\defeq\{0,1,2,\dots\}$. Let $f$ be a real-valued function. Suppose for every $N\in\NN$ that
    \begin{align*}
        f(x)-\sum_{n=0}^{N}a_n\varphi_n(x) \ = \ o(\varphi_N(x))
    \end{align*}
    as $x\to\infty$, where $\{a_n\}$ is a sequence of real numbers. Then we say that $\sum_{n\geq0}a_n\varphi_n(x)$ is the asymptotic expansion of $f(x)$ with respect to the asymptotic sequence $\varphi_n(x)$, writing
    \begin{align*}
        f(x) \ \sim \ \sum_{n\geq0}a_n\varphi_n(x).
    \end{align*}
\end{definition}

In \cite{panaitopol}, Panaitopol showed that the enumeration of indecomposable permutations appeared in an asymptotic expansion of the prime counting function. An extensive discussion on asymptotic expansions of the prime counting function may be found in \cite{asymptotic_expansions_of_prime_counting_func}.

\begin{prop}[Panaitopol, {\cite{panaitopol}}] \label{panaitopol}
    Letting $\pi(x)$ denote the prime counting function, we have the asymptotic expansion
    \begin{align*}
        \frac{1}{\pi(x)} \ \sim \ \frac{\log x}{x}\left(1-\frac{1}{\log x}-\sum_{n\geq2}\frac{\mathcal{I}_n}{(\log x)^{n}}\right),
    \end{align*}
    where $\mathcal{I}_n$ denotes the number of indecomposable permutations of length $n$.
\end{prop}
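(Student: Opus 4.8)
The plan is to reciprocate the classical asymptotic expansion of $\pi(x)$ and then recognize the resulting coefficients through Comtet's formula. The starting point I would take is the well-known expansion
\begin{align*}
    \pi(x) \ \sim \ \frac{x}{\log x}\sum_{n\geq0}\frac{n!}{(\log x)^n},
\end{align*}
which follows from the prime number theorem in the form $\pi(x)=\li(x)+O\!\left(xe^{-c\sqrt{\log x}}\right)$ together with the standard asymptotic expansion of $\li(x)$ (see \cite{asymptotic_expansions_of_prime_counting_func}). The error term is $o\!\left(x/(\log x)^N\right)$ for every $N$, since $e^{-c\sqrt{\log x}}$ decays faster than any power of $1/\log x$, so $\pi(x)$ and $\li(x)$ share this expansion with respect to the asymptotic sequence $x/(\log x)^{n+1}$.

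Writing $u=1/\log x$ and treating $F(u)=\sum_{n\geq0}n!\,u^n$ as a formal power series, the expansion above reads $\pi(x)\sim\frac{x}{\log x}F(u)$. I would then pass to reciprocals against the asymptotic sequence $\varphi_n(x)=\frac{\log x}{x}u^n=(\log x)^{1-n}/x$, which is genuinely asymptotic because $\varphi_{n+1}/\varphi_n=u\to0$. The claim then reduces to establishing
\begin{align*}
    \frac{1}{\pi(x)} \ \sim \ \frac{\log x}{x}\cdot\frac{1}{F(u)},
\end{align*}
where $1/F(u)$ denotes the formal-power-series reciprocal. Granting this, Comtet's formula gives $1/F(u)=1-\sum_{n\geq1}\mathcal{I}_n u^n$; since the length-one permutation has no proper prefix and is therefore indecomposable, $\mathcal{I}_1=1$, and peeling off the $n=1$ term produces $1-u-\sum_{n\geq2}\mathcal{I}_n u^n$. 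Substituting $u=1/\log x$ recovers the stated expansion verbatim.

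The hard part is justifying the reciprocation term by term. I would fix $N\in\NN$ and truncate, writing $\frac{\log x}{x}\pi(x)=P_N(u)+o(u^N)$ with $P_N(u)=\sum_{n=0}^N n!\,u^n$. Since $P_N(u)\to1$, the quantity $\frac{\log x}{x}\pi(x)$ is eventually bounded away from $0$, so its reciprocal exists and satisfies
\begin{align*}
    \frac{x}{\log x}\cdot\frac{1}{\pi(x)} \ = \ \frac{1}{P_N(u)+o(u^N)} \ = \ \frac{1}{P_N(u)}+o(u^N),
\end{align*}
the last equality following from $\frac{1}{a+\varepsilon}=\frac{1}{a}+O(\varepsilon)$ for $a$ near $1$. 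It remains to expand $1/P_N(u)$ in powers of $u$ and match it against the formal reciprocal $1/F(u)$. This is where the only real bookkeeping lives: because the coefficients of a formal reciprocal are determined by the convolution identity $\sum_j a_j b_{n-j}=[n=0]$, they depend only on finitely many coefficients of the original series, and $P_N(u)\equiv F(u)\pmod{u^{N+1}}$ forces $1/P_N(u)$ and $1/F(u)$ to agree through order $u^N$, the discrepancy being $O(u^{N+1})=o(u^N)$. Hence $\frac{x}{\log x}\cdot\frac{1}{\pi(x)}=\sum_{n=0}^N c_n u^n+o(u^N)$ with $c_0=1$ and $c_n=-\mathcal{I}_n$ for $n\geq1$; as $N$ was arbitrary, this is exactly the asserted asymptotic expansion. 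The one subtlety to watch is that the $o(u^N)$ error survives reciprocation without degrading its order, which the boundedness away from zero guarantees.
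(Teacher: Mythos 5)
Your proof is correct, and it rests on the same two ingredients the paper uses: de la Vall\'ee Poussin's error term to transfer the expansion between $\pi(x)$ and $\li(x)$, and Comtet's formula to identify the coefficients of the formal reciprocal of $F(u)=\sum_{n\geq0}n!\,u^n$ as $1-\sum_{n\geq1}\mathcal{I}_n u^n$ with $\mathcal{I}_1=1$. Where you genuinely diverge is in how the reciprocation is executed. The paper never proves the proposition directly (it cites Panaitopol and notes it is the $k=1$ case of Theorem \ref{main_result}); its route manipulates Comtet's identity as a formal power series, performs the change of variable $x\mapsto 1/x$, and then invokes $A(1/x)\sim xe^{-x}\Ei(x)$ together with $\Ei(x)=\li(e^x)$, in effect asserting that an asymptotic expansion with leading coefficient $1$ can be reciprocated term by term; it also compares $1/\pi$ and $1/\li$ \emph{after} reciprocation, whereas you compare $\pi$ and $\li$ \emph{before} --- an equivalent maneuver, since $e^{-c\sqrt{\log x}}$ beats every power of $1/\log x$ either way. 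Your truncation argument --- $\tfrac{\log x}{x}\pi(x)$ eventually bounded away from zero, $\bigl(P_N(u)+o(u^N)\bigr)^{-1}=P_N(u)^{-1}+o(u^N)$, and $P_N\equiv F\pmod{u^{N+1}}$ forcing the two reciprocals to agree through order $u^N$ because reciprocal coefficients are determined by the convolution identity --- is exactly the analytic justification the paper leaves tacit at the step ``Proposition \ref{exp_int_asymptotic_expansion} gives the relation $A(1/x)\sim xe^{-x}\Ei(x)$; thus \dots''. So your version is more self-contained and rigorous on the analytic side, while the paper's version, by routing through $\Ei$ and Lemma \ref{formula_for_li_k}, produces the closed-form identity $1/\li_k(e^x)=(k-1)!\bigl(\Ei(x)-e^x\sum_{i=0}^{k-2}i!\,x^{-i-1}\bigr)^{-1}$ that lets the argument generalize uniformly in $k$; your approach would generalize too, but only after importing that same lemma to get the expansion of $\li_k$.
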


We remark that Proposition \ref{panaitopol} is equivalent to the $k=1$ case of Theorem \ref{main_result} since $\li(x)^{-1}$ and $\pi(x)^{-1}$ have the same asymptotic expansion with respect to the asymptotic sequence $x^{-1}(\log x)^{-n+1}$. Indeed, letting $n\in\NN$ be arbitrary and appealing to the classical estimate
\begin{align*}
    \pi(x) \ = \ \li(x)+O\left(xe^{-a\sqrt{\log x}}\right) \text{~for some~}a>0
\end{align*}
of de la Vall\'ee Poussin \cite{poussin}, we have
\begin{align*}
    &\phantom{=}\frac{x}{\pi(x)\log^{-n+1}(x)}-\frac{x}{\li(x)\log^{-n+1}(x)} \ = \ x\log^{n-1}(x)\left(\frac{1}{\pi(x)}-\frac{1}{\li(x)}\right)\\
    &= \ x\log^{n-1}(x)\left(\frac{1}{\li(x)+O(xe^{-a\sqrt{\log x}})}-\frac{1}{\li(x)}\right) \ = \ x\log^{n-1}(x)\left(\frac{O(xe^{-a\sqrt{\log x}})}{\li(x)(\li(x)+O(xe^{-a\sqrt{\log x}}))}\right)\\
    &= \ x\log^{n-1}(x)~O\left(\frac{xe^{-a\sqrt{\log x}}}{\li(x)^2}\right) \ = \ x\log^{n-1}(x)~O\left(\frac{xe^{-a\sqrt{\log x}}}{x^2/\log^2(x)}\right) \ = \ O\left(e^{-a\sqrt{\log x}}\log^{n+1}(x)\right),
\end{align*}
which converges to zero for any $a>0$ since the last function is globally nonnegative and decreasing for $x>\exp((2n+2)^2/a^2)$. With this remark noted, Proposition \ref{panaitopol} may be quickly obtained from Comtet's generating function formula by following the $k=1$ case of the proof of Theorem \ref{main_result}.

The famous prime number theorem, equivalently asserting that the prime counting function is asymptotic to the logarithmic integral, is generalized by the first Hardy-Littlewood conjecture.


\begin{conj}[First Hardy-Littlewood Conjecture, {\cites{discussion_of_HLC,discussion_of_HLC_2,hardy_littlewood}}]\label{hardy_littlewood_conj}
    Let $0<m_1<m_2<\cdots<m_k$ be integers. Unless $P=(p,p+2m_1,p+2m_2,\dots,p+2m_k)$ forms a complete residue class with respect to some prime, the number of primes $p\leq x$ such that $p,p+2m_1,p+2m_2,\dots,p+2m_k$ are prime, denoted $\pi_P(x)$, is asymptotic\footnote{We say that functions $f,g$ are \emph{asymptotic} if $\lim_{x\to\infty}f(x)/g(x)=1$.} to
    \begin{align*}
        2^k\prod_{\substack{q\text{~\normalfont prime}\\ q\geq3}}\left(1-\frac{1}{q}\right)^{-k-1}\left(1-\frac{w(q;2m_1,2m_2,\dots,2m_k)}{q}\right)\int_{2}^{x}\frac{1}{(\log t)^{k+1}}dt,
    \end{align*}
    where $w(q;2m_1,2m_2,\dots,2m_k)$ is the number of distinct residues of $0,2m_1,2m_2,\dots,2m_k$ modulo $q$.
\end{conj}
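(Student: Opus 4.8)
The statement is the celebrated first Hardy--Littlewood conjecture, which remains open for every $k\geq 1$; accordingly no unconditional proof is available, and what follows is the heuristic derivation that produces the conjectured constant, together with the reason a rigorous argument lies beyond current technology. The plan is to model primality probabilistically and then correct the naive model for its local dependencies at each prime. By the prime number theorem a random integer near $t$ is prime with ``probability'' $1/\log t$; if the $k+1$ events that $p,p+2m_1,\dots,p+2m_k$ are simultaneously prime were independent, the density of such $p$ near $t$ would be $(\log t)^{-(k+1)}$, and integrating over $t\in[2,x]$ would already yield the factor $\int_2^x(\log t)^{-(k+1)}\,dt$. The entire content of the conjecture is therefore the multiplicative constant measuring the failure of independence.

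To compute that constant I would work prime by prime. Fix a prime $q$, and write $2m_0=0$ for the shift of the first entry. For each entry $p+2m_j$ to be coprime to $q$ it is necessary that $p$ avoid the residue $-2m_j \pmod q$; since negation permutes $\ZZ/q\ZZ$, the number of forbidden classes for $p$ equals the number of distinct residues among $0,2m_1,\dots,2m_k$, namely $w(q;2m_1,\dots,2m_k)$. Thus $p$ may occupy $q-w(q;2m_1,\dots,2m_k)$ of the $q$ classes, so the true local density is $1-w(q;2m_1,\dots,2m_k)/q$, whereas the naive independent model assigns each of the $k+1$ entries coprimality probability $1-1/q$, i.e. $(1-1/q)^{k+1}$. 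The correction factor at $q$ is the ratio $(1-1/q)^{-(k+1)}(1-w(q;2m_1,\dots,2m_k)/q)$, and the singular series is the product of these ratios over all primes. Isolating $q=2$, where every shift is even so that $w(2;2m_1,\dots,2m_k)=1$, gives $(1-\tfrac12)^{-(k+1)}(1-\tfrac12)=2^k$, and the remaining product over $q\geq 3$ is exactly the one in the statement. As a consistency check, the excluded case --- $P$ a complete residue system modulo some prime $q$ --- is precisely $w(q;2m_1,\dots,2m_k)=q$, which makes the factor vanish, reflecting that one entry is then always divisible by $q$ and leaving only finitely many admissible tuples.

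The rigorous framework in which one would attempt to justify this heuristic is the Hardy--Littlewood circle method applied to the correlation sum $\sum_{n\leq x}\Lambda(n)\Lambda(n+2m_1)\cdots\Lambda(n+2m_k)$, where $\Lambda$ denotes the von Mangoldt function. On the major arcs --- neighborhoods of rationals with small denominator --- the relevant exponential sums are well approximated and, after summing the resulting Ramanujan-type sums over denominators, reproduce exactly the singular series derived above. The hard part, and the reason the conjecture is open, is the minor arcs: for a correlation of $k+1$ copies of $\Lambda$ there are too few ``independent'' variables to extract the square-root cancellation the circle method requires, and the obstruction is ultimately a manifestation of the parity problem of sieve theory. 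Sieve methods in the tradition of Brun and Selberg deliver upper bounds of the conjectured order of magnitude, and lower bounds for tuples of almost-primes, but they provably cannot isolate the genuine main term; consequently a complete proof is not within reach of present methods, and the conjecture enters this paper only as the source of the integrals $\int_2^x(\log t)^{-(k+1)}\,dt$ whose reciprocal asymptotics are the actual object of study.
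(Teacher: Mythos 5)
You have correctly recognized that this statement is not proved in the paper at all: it is the first Hardy--Littlewood conjecture, stated with citations to the literature and used only as motivation for studying the integrals $\int_2^x(\log t)^{-(k+1)}\,dt$, so there is no proof in the paper to compare against and no proof should have been attempted. Your heuristic derivation of the singular series is the standard one and is carried out correctly: the local factor at a prime $q$ is indeed the ratio $\left(1-\frac{1}{q}\right)^{-(k+1)}\left(1-\frac{w(q;2m_1,\dots,2m_k)}{q}\right)$, since $p$ must avoid exactly the $w(q;2m_1,\dots,2m_k)$ residues $-2m_j \bmod q$; your evaluation at $q=2$, where every shift is even so $w(2;2m_1,\dots,2m_k)=1$ and the factor equals $2^k$, matches the constant in the statement; and your observation that the excluded case (the tuple covering a complete residue system modulo some prime) corresponds precisely to $w(q;2m_1,\dots,2m_k)=q$, making the product vanish, correctly explains the hypothesis. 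Your closing discussion of why the circle method fails on the minor arcs and why sieve methods are blocked by the parity problem is also accurate. In short: the paper takes no approach (it quotes a conjecture), you take the canonical heuristic approach, and your account is sound as a heuristic while rightly claiming nothing more.
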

Like the prime number theorem, Conjecture \ref{hardy_littlewood_conj} does not concern an asymptotic expansion; rather, only the lowest order asymptotic term of $\pi_P(x)$ is regarded. Theorem \ref{main_result}, the main result of this paper, gives the asymptotic expansion
    \begin{align*}
        \left(\dashint_{0}^{x}\frac{1}{(\log t)^k}dt\right)^{-1} \ \sim \ \frac{(\log x)^k}{x}\left(1-\frac{k}{\log x}-\sum_{n\geq1}\frac{a_{n}^{(k)}}{(\log x)^{n+1}}\right)
    \end{align*}
for any $k\in\ZZ^+$, where the dashed integral denotes the Cauchy principal value and the sequence $a_n^{(k)}$ is as defined in Definition \ref{sequence_def}. In particular, Theorem \ref{main_result} provides auxiliary information for a potential strengthening of Conjecture \ref{hardy_littlewood_conj} such that we may eventually obtain (or at least conjecture) a complete asymptotic expansion for $\pi_P(x)$.

\section{Results}

To prove Theorem \ref{main_result}, we first generalize Comtet's generating function formula in Proposition \ref{comtet_generalization}. To this end, we define the following family of recurrences.

\begin{definition}\label{sequence_def}
    For $k\in\ZZ^+$, let $a_n^{(k)}$ be the sequence defined by the recurrence
    \begin{align*}
        (k-1)!a_{n}^{(k)} \ = \ (n+k)!-k(n+k-1)!-\sum_{m=0}^{n-2}(n-m+k-2)!a_{m+1}^{(k)}
    \end{align*}
    with $a_0^{(k)}=1$.
\end{definition} 
As noted in \cite[pg.357]{asymptotic_expansions_of_prime_counting_func}, $a_n^{(1)}$ is the number of indecomposable permutations of length $n+1$. The sequences for $k=2,3,4$ and their respective OEIS A-numbers are listed in Table \ref{1L1}.

\begin{table}[h]\label{table1}
  \small\centering
  \begin{floatrow}
    \ttabbox{ \caption{The first few terms of $a_n^{(2)},a_n^{(3)},a_n^{(4)}$ and their OEIS A-numbers.} \label{1L1}}
    {
    \begin{tabular}{|c c c c c c c c c c c|} 
 \hline
 $n$ & 0 & 1 & 2 & 3 & 4 & 5 & 6 & 7 & 8 & 9 \\
 \hline
$a_n^{(2)}$ (\href{https://oeis.org/A111537}{A111537}) & 1 & 2 & 8 & 44 & 296 & 2312 & 20384 & 199376 & 2138336 & 24936416 \\ 
 
$a_n^{(3)}$ (\href{https://oeis.org/A111546}{A111546}) & 1 & 3 & 15 & 99 & 783 & 7083 & 71415 & 789939 & 9485343 & 122721723 \\
 
$a_n^{(4)}$ (\href{https://oeis.org/A111556}{A111556}) & 1 & 4 & 24 & 184 & 1664 & 17024 & 192384 & 2366144 & 31362304 & 444907264 \\
 \hline
\end{tabular}
    }
  \end{floatrow}
\end{table}

\begin{prop}\label{comtet_generalization}
    Let $k\in\ZZ^+$. Then
    \begin{align*}
        1-(k-1)!\left(\sum_{n\geq 0}{(n+k-1)!} x^{n}\right)^{-1} \ = \ kx+\sum_{n\geq1}a_{n}^{(k)}x^{n+1},
    \end{align*}
    taken as formal power series.
\end{prop}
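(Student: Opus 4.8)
The plan is to clear the reciprocal and reduce the statement to a single formal-power-series identity that can be verified coefficient by coefficient. Write $F(x) = \sum_{n\geq 0}(n+k-1)!\,x^n$ for the series being inverted and $G(x) = kx + \sum_{n\geq 1}a_n^{(k)}x^{n+1}$ for the claimed right-hand side. Since the constant term of $F$ is $(k-1)!\neq 0$, the series $F$ is invertible in the ring of formal power series over $\QQ$, so the asserted identity $1-(k-1)!\,F(x)^{-1} = G(x)$ is equivalent, after multiplying through by $F(x)$, to
\begin{align*}
    F(x)\bigl(1-G(x)\bigr) \ = \ (k-1)!.
\end{align*}
It therefore suffices to establish this product identity. (We note that working over $\QQ$ is what licenses the inversion when $k\geq 3$; the claim includes that the resulting series in fact has integer coefficients.)

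First I would set $H(x) = 1 - G(x)$ and record its coefficients: $[x^0]H = 1$, $[x^1]H = -k$, and $[x^{j}]H = -a_{j-1}^{(k)}$ for $j\geq 2$, while $[x^n]F = (n+k-1)!$. The coefficient of $x^{n+1}$ in the Cauchy product $F(x)H(x)$ is the convolution $\sum_{j=0}^{n+1}[x^j]H\cdot(n-j+k)!$. I would then treat three ranges. The constant term of $FH$ is $[x^0]H\cdot(k-1)! = (k-1)!$, matching the right-hand side. The coefficient of $x^1$ is $k! - k\,(k-1)! = 0$, the cancellation arranged precisely by the isolated term $kx$ in $G$.

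The substantive step is to show that the coefficient of $x^{n+1}$ vanishes for every $n\geq 1$. Separating the $j=0$ and $j=1$ terms of the convolution, re-indexing the tail by $m = j-1$, and extracting the $m=n$ term (which contributes $a_n^{(k)}(k-1)!$), the condition $[x^{n+1}](FH)=0$ becomes
\begin{align*}
    (k-1)!\,a_n^{(k)} \ = \ (n+k)! - k\,(n+k-1)! - \sum_{m=1}^{n-1}(n-m+k-1)!\,a_m^{(k)}.
\end{align*}
After shifting the summation index in Definition \ref{sequence_def} via $m\mapsto m+1$, its sum $\sum_{m=0}^{n-2}(n-m+k-2)!\,a_{m+1}^{(k)}$ becomes $\sum_{m=1}^{n-1}(n-m+k-1)!\,a_m^{(k)}$, so the displayed equation is exactly that recurrence. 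Hence every coefficient of $x^{n+1}$ with $n\geq 1$ vanishes by the defining recurrence, the coefficients of $x^0$ and $x^1$ match as above, and therefore $F(x)H(x) = (k-1)!$ identically, which is the claim.

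I expect the only delicate part to be the bookkeeping in this re-indexing: one must align the convolution, whose nontrivial terms pair $a_{j-1}^{(k)}$ with $(n-j+k)!$, against the recurrence's sum, which pairs $a_{m+1}^{(k)}$ with $(n-m+k-2)!$, and check that the boundary cases correspond correctly — in particular the empty sum at $n=1$, where the displayed equation reduces to $(k-1)!\,a_1^{(k)} = (k+1)! - k\,k! = k!$, and the separated factorial terms $(n+k)!$ and $k\,(n+k-1)!$. Once the index conventions are matched the identity is immediate, and no analytic input is required, since everything is a manipulation of formal power series.
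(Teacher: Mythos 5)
Your proof is correct, and it takes a genuinely different route from the paper's. The paper works forwards: it multiplies the recurrence of Definition \ref{sequence_def} by $x^{n+1}$, sums over $n\geq1$, recognizes the resulting double sum as the Cauchy product $x(I_k(x)-kx)A_k(x)$ where $A_m(x)=\sum_{n\geq0}(n+m)!x^n$, solves the resulting functional equation for $I_k(x)$ as a rational expression in $A_k$ and $A_{k-1}$, and then simplifies using the telescoping identity $(k-1)!+xA_k(x)=A_{k-1}(x)$. You instead cross-multiply and verify the equivalent identity $F(x)\bigl(1-G(x)\bigr)=(k-1)!$ coefficient by coefficient; your re-indexing is sound (the convolution condition $\sum_{m=1}^{n}(n-m+k-1)!\,a_m^{(k)}=(n+k)!-k(n+k-1)!$ for $n\geq1$ is exactly the paper's recurrence under $m\mapsto m+1$, with the $m=n$ term supplying $(k-1)!\,a_n^{(k)}$), and the $x^0$, $x^1$, and $n=1$ boundary cases all check out, the last consistently giving $a_1^{(k)}=k$ as in the paper's table. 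The two arguments share the same computational core --- the recurrence is precisely the statement that the relevant convolution vanishes --- but your verification is shorter and avoids both the solve step and the auxiliary identity $A_{k-1}=(k-1)!+xA_k$; what it gives up is the derivational character of the paper's proof, which shows how the closed form $1-(k-1)!/A_{k-1}(x)$ emerges rather than confirming a formula already in hand. Your parenthetical on invertibility is also a point of care the paper glosses over: since $(k-1)!$ is not a unit in $\ZZ$ for $k\geq3$, the inversion of $F$ must be performed over $\QQ$ even though the paper declares its series to be over the integers (one quibble: the proposition as stated does not actually assert integrality of the $a_n^{(k)}$, which are a priori only rational from Definition \ref{sequence_def}, so that remark is prudent bookkeeping rather than a needed part of the claim).
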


\begin{proof}
    Let $k\in\ZZ^+$. Let $I_k(x)\defeq kx+\sum_{n\geq1}a_{n}^{(k)}x^{n+1}$ and for $m\in\NN$, let $A_m(x)\defeq\sum_{n\geq0}(n+m)!x^n$. Then by the definition of $a_n^{(k)}$,
    \begin{align*}
        &(k-1)!I_k(x) \ = \ k!x-\sum_{n\geq1}(k-1)!a_{n}^{(k)}x^{n+1}\nonumber\\
        &= \ k!x+x(A_{k}(x)-k!)-kx (A_{k-1}(x)-(k-1)!)-x\sum_{n\geq1}\sum_{m=0}^{n-2}(n-m+k-2)!a_{m+1}^{(k)}x^n\nonumber\\
        &= \ k!x+xA_{k}(x)-kx A_{k-1}(x)-x^3\sum_{n\geq0}\sum_{m=0}^{n}(n-m+k)!a_{m+1}^{(k)}x^{n}.
    \end{align*}
    Noting that
    \begin{align*}
        \sum_{n\geq0}a_{n+1}^{(k)}x^n \ = \ x^{-2}\sum_{n\geq1}a_n^{(k)}x^{n+1} \ = \ x^{-2}(I_k(x)-kx),
    \end{align*}
    we obtain
    \begin{align*}
        (k-1)!I_k(x) \ &= \ k!x+xA_{k}(x)-kx A_{k-1}(x)-x(I_k(x)-kx)A_k(x)\\
        &= \ k!x-kx A_{k-1}(x)-x(I_k(x)-kx-1)A_k(x).
    \end{align*}
    It then follows that
    \begin{align*}
        I_k(x) \ = \ \frac{k!x+(1+kx)xA_k(x)-kxA_{k-1}(x)}{(k-1)!+x A_k(x)}.
    \end{align*}
    We may now simplify the above since 
    \begin{align*}
        (k-1)!+xA_k(x) \ = \ A_{k-1}(x).
    \end{align*}
    Indeed, we thusly have
    \begin{align*}
        I_k(x) \ &= \ \frac{k!x+(1+kx)\left(A_{k-1}(x)-(k-1)!\right)-kx A_{k-1}(x)}{A_{k-1}(x)}\\
        &= \ 1-\frac{(k-1)!}{A_{k-1}(x)}\\
        &= \ 1-(k-1)!\left(\sum_{n\geq 0}{(n+k-1)!} x^{n}\right)^{-1},
    \end{align*}
    as desired.
\end{proof}

Now with Proposition \ref{comtet_generalization} proven, we define the \emph{exponential integral} and the \emph{Hardy-Littlewood logarithmic integrals}. Then, after invoking the standard expansion stated in Proposition \ref{exp_int_asymptotic_expansion} and inductively proving Lemma \ref{formula_for_li_k}, we finally prove Theorem \ref{main_result}.

\begin{definition}[Exponential integral]
    We define $\Ei:\RR\backslash\{0\}\to\RR$ by
    \begin{align*}
        \Ei(x) \ \defeq \ \dashint_{-\infty}^{x}\frac{e^t}{t}dt,
    \end{align*}
    where the dashed integral denotes the Cauchy principal value.
\end{definition}

\begin{prop}[{\cite[\href{https://dlmf.nist.gov/6.12\#E2}{Equation 6.12.2}]{NIST:DLMF}}]\label{exp_int_asymptotic_expansion}
    As $x\to\infty$, we have the asymptotic expansion
    \begin{align*}
        \Ei(x) \ \sim \ \frac{e^x}{x}\sum_{n\geq0}\frac{n!}{x^n}.
    \end{align*}
\end{prop}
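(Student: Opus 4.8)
The plan is to obtain the expansion through repeated integration by parts, isolating the singularity at the origin and the Cauchy principal value into a harmless additive constant. First I would fix some $x_0>0$ and split $\Ei(x)=\Ei(x_0)+\int_{x_0}^{x}\frac{e^t}{t}\,dt$ for $x>x_0$. Because the remaining path of integration avoids $t=0$, the principal value plays no further role, and the constant $\Ei(x_0)$ is $o(e^x/x^{N+1})$ for every $N$ since $e^x/x^{N+1}\to\infty$. In this way the entire asymptotic content is carried by the proper integral $\int_{x_0}^{x}e^t/t\,dt$, while the singularity is quarantined where it cannot affect any term of the expansion.

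Next I would integrate by parts repeatedly, each time differentiating the negative power of $t$ and integrating $e^t$. Taking $u=t^{-(m+1)}$ and $dv=e^t\,dt$ shows that one step replaces $\int_{x_0}^{x}\frac{e^t}{t^{m+1}}\,dt$ by $\frac{e^x}{x^{m+1}}$, plus a constant, plus $(m+1)\int_{x_0}^{x}\frac{e^t}{t^{m+2}}\,dt$. Iterating $N+1$ times and collecting the accumulated factorials yields
\[
\Ei(x)=e^x\sum_{n=0}^{N}\frac{n!}{x^{n+1}}+C_N+(N+1)!\int_{x_0}^{x}\frac{e^t}{t^{N+2}}\,dt
\]
for some constant $C_N$; the factorials $n!$ arise precisely because each step multiplies the previous remainder by its current exponent.

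It then remains to verify that the remainder $(N+1)!\int_{x_0}^{x}e^t/t^{N+2}\,dt$ is $o(e^x/x^{N+1})$, which I would establish by splitting the interval at $x/2$. On $[x_0,x/2]$ the integrand is at most $e^{x/2}/x_0^{N+2}$, so that contribution is $O(x\,e^{x/2})$, negligible against $e^x/x^{N+1}$ since $x^{N+2}e^{-x/2}\to0$. On $[x/2,x]$ the bound $t^{N+2}\geq(x/2)^{N+2}$ together with $\int_{x/2}^{x}e^t\,dt\leq e^x$ gives a contribution of order $e^x/x^{N+2}=o(e^x/x^{N+1})$. Adding these to the bounded $C_N$ produces $\Ei(x)-e^x\sum_{n=0}^{N}n!/x^{n+1}=o(e^x/x^{N+1})$, which is exactly the defining condition with asymptotic sequence $\varphi_n(x)=e^x/x^{n+1}$ and coefficients $a_n=n!$; the nesting requirement $\varphi_{n+1}=o(\varphi_n)$ holds since $\varphi_{n+1}/\varphi_n=1/x$.

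The main obstacle I anticipate lies not in the integration by parts, which is mechanical, but in the honest control of the remainder: one must resist integrating by parts indefinitely, since the resulting series $\sum_{n\geq0}n!/x^{n+1}$ diverges for every fixed $x$, and instead truncate at finite order and bound the tail directly. The interval-splitting estimate is the delicate step, converting crude pointwise bounds into the sharp $o(e^x/x^{N+1})$ decay that the definition of asymptotic expansion requires.
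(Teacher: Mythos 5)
Your proof is correct and complete. Note first that the paper itself gives no argument for this proposition: it is quoted directly from the DLMF (Equation 6.12.2) as a standard fact, so there is no internal proof to compare against; what you have supplied is the classical integration-by-parts proof underlying the cited formula. All three of your main steps are sound. Writing $\Ei(x)=\Ei(x_0)+\int_{x_0}^{x}e^t/t\,dt$ for fixed $x_0>0$ correctly quarantines the Cauchy principal value at the origin into a constant, which is indeed $o(e^x/x^{N+1})$ for every $N$. The iterated integration by parts with $u=t^{-(m+1)}$, $dv=e^t\,dt$ produces exactly the remainder identity you state, with the factorials accumulating as the exponents multiply, and your splitting of the remainder integral at $x/2$ yields contributions $O(x e^{x/2})$ and $O(e^x/x^{N+2})$, both of which are $o(e^x/x^{N+1})$; this verifies precisely the defining condition in the paper's Definition 1.1 with asymptotic sequence $\varphi_n(x)=e^x/x^{n+1}$ and coefficients $a_n=n!$, including the nesting requirement $\varphi_{n+1}(x)=o(\varphi_n(x))$. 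Your closing remark also puts the emphasis in the right place: since $\sum_{n\geq0}n!/x^{n+1}$ diverges for every fixed $x$, one must truncate at finite order and bound the tail directly, and the interval-splitting estimate is where the genuine analytic content lies. In effect, you have made the paper more self-contained by proving a result it only cites.
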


\begin{definition}[Hardy-Littlewood logarithmic integrals]
    For $k\in\ZZ^+$, we define $\text{\normalfont li}_k:\RR\backslash\{1\}\to\RR$ by
    \begin{align*}
        \text{\normalfont li}_k(x) \ \defeq \ \dashint_{0}^{x}\frac{1}{(\log t)^k}dt,
    \end{align*}
    where the dashed integral denotes the Cauchy principal value. Note that $\text{\normalfont li}_1(x)$ is the usual logarithmic integral, denoted $\text{\normalfont li}(x)$.
\end{definition}

\begin{lemma}\label{formula_for_li_k}
    Let $k\in\ZZ^+$. Then
    \begin{align*}
        \text{\normalfont li}_k(x) \ = \ \frac{1}{(k-1)!}\left(\text{\normalfont li}(x)-x\sum_{i=0}^{k-2}\frac{i!}{(\log x)^{i+1}}\right).
    \end{align*}
\end{lemma}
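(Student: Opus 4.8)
The plan is to induct on $k$, using integration by parts to relate $\li_{k+1}$ to $\li_k$ and anchoring the induction at $k=1$, where the claimed identity reduces to $\li_1=\li$ (the empty sum $\sum_{i=0}^{-1}$ being zero) and so holds by definition. The engine of the induction is the recursion
\begin{align*}
    \li_{k+1}(x) \ = \ \frac{1}{k}\left(\li_k(x)-\frac{x}{(\log x)^k}\right),
\end{align*}
which I would establish for every $k\in\ZZ^+$ and then combine with the inductive hypothesis. Anchoring at $k=1$ is what pins down the constant of integration: one can independently check by differentiating the right-hand side of the lemma that it is an antiderivative of $(\log x)^{-k}$ (the interior terms telescope, leaving $\frac{1}{(k-1)!}\cdot\frac{(k-1)!}{(\log x)^{k}}$), but that computation alone determines $\li_k$ only up to an additive constant, whereas the induction propagates the exact value from the base case.

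To obtain the recursion I would integrate $\li_k(x)=\dashint_0^x(\log t)^{-k}\,dt$ by parts with $u=(\log t)^{-k}$ and $dv=dt$, giving the antiderivative relation $\int(\log t)^{-k}\,dt=t(\log t)^{-k}+k\int(\log t)^{-k-1}\,dt$. The boundary contribution at the lower limit $t\to0^+$ vanishes since $t(\log t)^{-k}\to0$ there, and the contribution at $t=x$ supplies the term $x(\log x)^{-k}$, yielding $\li_k(x)=x(\log x)^{-k}+k\,\li_{k+1}(x)$ after rearrangement. The delicate point, and the main obstacle, is justifying this integration by parts across the singularity of the integrand at $t=1$, where $\log t$ vanishes. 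I would handle the principal value by splitting at $1\mp\epsilon$, performing the integration by parts on each of $\int_0^{1-\epsilon}$ and $\int_{1+\epsilon}^x$ separately, and tracking the boundary terms $(1-\epsilon)(\log(1-\epsilon))^{-k}$ and $-(1+\epsilon)(\log(1+\epsilon))^{-k}$ generated at the cut. Each of these diverges as $\epsilon\to0$, so the crux is to show via a short Taylor expansion of $\log(1\pm\epsilon)$ that their sum cancels the matching divergence coming from the truncated $\li_{k+1}$, leaving the finite recursion above in the limit.

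Granting the recursion, the inductive step is routine algebra. Substituting the inductive hypothesis for $\li_k$ gives
\begin{align*}
    \li_{k+1}(x) \ = \ \frac{1}{k!}\left(\li(x)-x\sum_{i=0}^{k-2}\frac{i!}{(\log x)^{i+1}}-\frac{(k-1)!\,x}{(\log x)^{k}}\right),
\end{align*}
and I would observe that the trailing term $\frac{(k-1)!\,x}{(\log x)^{k}}$ is precisely the $i=k-1$ summand of the displayed sum; absorbing it extends the sum to $i=k-1=(k+1)-2$ while the prefactor $1/k!$ equals $1/\bigl((k+1)-1\bigr)!$, which is exactly the asserted formula with $k$ replaced by $k+1$. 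This closes the induction.
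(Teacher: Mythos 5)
Your proposal follows the paper's proof exactly in structure: the same induction on $k$ anchored at $k=1$ (where the empty sum makes the claim definitional), the same integration-by-parts recursion $\li_k(x)=x(\log x)^{-k}+k\,\li_{k+1}(x)$, and the same absorption of the trailing term $(k-1)!\,x/(\log x)^k$ as the $i=k-1$ summand. The base case and the inductive algebra are correct and match the paper's computation line for line.

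The one place you go beyond the paper --- the $\epsilon$-splitting at $t=1$ to justify the principal-value integration by parts --- is where your argument, as stated, would fail, and it is worth being precise because the paper silently skates over the identical point. Near $t=1$, with $u=t-1$, one has $(\log t)^{-k}=u^{-k}+\tfrac{k}{2}u^{-k+1}+\cdots$, and under symmetric truncation the odd-order poles cancel but the even-order poles of order $\geq 2$ do not: for instance $\int_0^{1-\epsilon}+\int_{1+\epsilon}^x$ of $(\log t)^{-2}$ equals $2/\epsilon+\li(x)-x/\log x+O(\epsilon)$. Consequently the symmetric Cauchy principal value defining $\li_m$ \emph{diverges} for every $m\geq 2$, so your plan of showing that the cut boundary terms "cancel the divergence coming from the truncated $\li_{k+1}$, leaving the finite recursion in the limit" cannot close: the limit of the truncated $\li_{k+1}$ does not exist, so the recursion cannot hold between convergent principal values, and for $k\geq 2$ the truncated $\li_k$ on the left-hand side diverges as well, so the bookkeeping is not a single cancellation but a matching of \emph{all} divergent powers of $\epsilon$ on both sides of the identity, valid at each fixed $\epsilon>0$. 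The honest repair is to read the dashed integral for $k\geq 2$ as a Hadamard finite part (equivalently, to take the lemma's right-hand side as the regularized definition of $\li_k$), under which your $\epsilon$-expansion, carried through every Laurent coefficient rather than just the leading one, is exactly the required justification. To be fair, the paper's own proof asserts the recursion with no comment at all, so your attempt surfaces a genuine defect in the paper's definition rather than introducing a new error --- but the cancellation claim as you framed it is not the correct mechanism for $k\geq 2$.
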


\begin{proof}
    This result lends a simple inductive proof via integration by parts. For $k=1$, the claim follows from the definition of the logarithmic integral. For the inductive step, note that
    \begin{align*}
        \text{\normalfont li}_k(x) \ = \ \frac{x}{(\log x)^k}+\dashint_{0}^{x}\frac{k}{(\log t)^{k+1}}dt.
    \end{align*}
    So by inductive hypothesis,
    \begin{align*}
        \text{\normalfont li}_{k+1}(x) \ = \ \dashint_{0}^{x}\frac{1}{(\log t)^{k+1}}dt \ &= \ \frac{1}{k!}\left(\text{\normalfont li}(x)-x\sum_{i=0}^{k-2}\frac{i!}{(\log x)^{i+1}}\right)-\frac{x}{k(\log x)^k}\nonumber \\
        &= \ \frac{1}{k!}\left(\text{\normalfont li}(x)-x\sum_{i=0}^{k-1}\frac{i!}{(\log x)^{i+1}}\right).
    \end{align*}
\end{proof}

\begin{theorem}\label{main_result}
    Let $k\in\ZZ^+$. Then we have the asymptotic expansion
    \begin{align*}
        \frac{1}{\text{\normalfont li}_k(x)} \ \sim \  \frac{(\log x)^k}{x}\left(1-\frac{k}{\log x}-\sum_{n\geq1}\frac{a_{n}^{(k)}}{(\log x)^{n+1}}\right).
    \end{align*}
\end{theorem}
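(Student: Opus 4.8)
The plan is to reduce the theorem to the formal power series identity of Proposition \ref{comtet_generalization} by exhibiting $\li_k(x)$ as, asymptotically, a scalar multiple of $\sum_{m\geq0}(m+k-1)!(\log x)^{-m}$, and then inverting. First I would record the classical identity $\li(x)=\Ei(\log x)$, obtained from the substitution $t=e^u$ in the defining principal-value integral (the principal value absorbing the singularities at $t=1$ and $u=0$). Feeding $\log x$ into Proposition \ref{exp_int_asymptotic_expansion} then gives the asymptotic expansion
\begin{align*}
    \li(x) \ = \ \Ei(\log x) \ \sim \ \frac{x}{\log x}\sum_{n\geq0}\frac{n!}{(\log x)^n} \ = \ x\sum_{n\geq0}\frac{n!}{(\log x)^{n+1}}.
\end{align*}

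Next I would substitute this into Lemma \ref{formula_for_li_k}. The subtracted finite sum $x\sum_{i=0}^{k-2}i!(\log x)^{-(i+1)}$ cancels precisely the terms $n=0,1,\dots,k-2$ of the expansion of $\li(x)$, so that after reindexing with $m=n-(k-1)$ one is left with
\begin{align*}
    \li_k(x) \ \sim \ \frac{x}{(k-1)!}\sum_{n\geq k-1}\frac{n!}{(\log x)^{n+1}} \ = \ \frac{x}{(k-1)!\,(\log x)^k}\sum_{m\geq0}\frac{(m+k-1)!}{(\log x)^m}.
\end{align*}
The leading term here is $x(\log x)^{-k}$ with coefficient $1$, so $\li_k(x)\to\infty$ and its reciprocal is well defined for large $x$. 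To invert cleanly I would normalize by setting $F(x)\defeq(\log x)^k\,\li_k(x)/x$, so that $F$ has the asymptotic expansion $F(x)\sim\sum_{m\geq0}\tfrac{(m+k-1)!}{(k-1)!}u^m$ in the single small quantity $u\defeq1/\log x$, with nonzero constant term $1$.

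Taking reciprocals, the coefficient series of $1/F$ is the formal power series reciprocal $\bigl(\sum_{m\geq0}\tfrac{(m+k-1)!}{(k-1)!}u^m\bigr)^{-1}=(k-1)!\bigl(\sum_{m\geq0}(m+k-1)!u^m\bigr)^{-1}$, which by Proposition \ref{comtet_generalization} (applied with the formal variable $u$ in place of $x$) equals $1-ku-\sum_{n\geq1}a_n^{(k)}u^{n+1}$. Since $1/\li_k(x)=(\log x)^k x^{-1}\cdot F(x)^{-1}$, restoring $u=1/\log x$ produces exactly
\begin{align*}
    \frac{1}{\li_k(x)} \ \sim \ \frac{(\log x)^k}{x}\left(1-\frac{k}{\log x}-\sum_{n\geq1}\frac{a_n^{(k)}}{(\log x)^{n+1}}\right),
\end{align*}
as claimed.

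The only delicate point, and the main obstacle, is justifying that the reciprocal of an asymptotic expansion is computed by the formal power series reciprocal of its coefficient series. This is not automatic for arbitrary asymptotic sequences, but here $F(x)$ expands in pure powers of the single vanishing quantity $u=1/\log x$ with nonzero constant term, which is exactly the situation where term-by-term inversion is valid. I would verify the requisite $o\bigl((\log x)^{-N}\bigr)$ bounds for the truncation errors of $1/F$ directly from the definition of asymptotic expansion, using that each truncation error of $F$ is $o\bigl((\log x)^{-N}\bigr)$ and that $F(x)^{-1}$ is bounded (tending to $1$); multiplying through by the factor $(\log x)^k/x$ then transports these estimates into the asymptotic sequence $x^{-1}(\log x)^{k+m}$ governing $1/\li_k$. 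This is routine bookkeeping rather than a substantive difficulty, and it is the step I would write out most carefully.
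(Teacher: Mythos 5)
Your proposal is correct and is essentially the paper's own proof: it uses the same three ingredients --- Proposition \ref{comtet_generalization}, the expansion of Proposition \ref{exp_int_asymptotic_expansion} transported via $\li(x)=\Ei(\log x)$, and Lemma \ref{formula_for_li_k} --- in the same roles. The only difference is direction: the paper starts from the formal reciprocal identity, substitutes $x\mapsto 1/x$, and recognizes the resulting denominator as $\li_k(e^x)$, whereas you first expand $\li_k(x)$ in powers of $1/\log x$ and then invert, making explicit the division-of-asymptotic-power-series justification that the paper leaves implicit in its final asymptotic manipulations.
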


\begin{proof}
    Let $k\in\ZZ^+$ and $A(x)\defeq\sum_{n\geq0}n!x^n$. By Proposition \ref{comtet_generalization},
    \begin{align*}
    kx+\sum_{n\geq1}a_{n}^{(k)}x^{n+1} \ &= \ 1-(k-1)!\left(\sum_{n\geq 0}{(n+k-1)!} x^{n}\right)^{-1}\nonumber \\ \nonumber
    &= \ 1-(k-1)!\left(\sum_{n\geq k-1}{n!} x^{n-k+1}\right)^{-1}\\ \nonumber
    &= \ 1-\frac{(k-1)!x^{k-1}}{\sum_{n\geq k-1}n! x^n}\\
    &= \ 1-\frac{(k-1)!x^{k-1}}{A(x)-\sum_{i=0}^{k-2}i!x^i}.
\end{align*}
After a change of variable, we obtain
\begin{align*}
    \frac{(k-1)!x^{1-k}}{A(1/x)-\sum_{i=0}^{k-2}i!x^{-i}} \ = \ 1-\frac{k}{x}-\sum_{n\geq1}\frac{a_{n}^{(k)}}{x^{n+1}}.
\end{align*}
Proposition \ref{exp_int_asymptotic_expansion} gives the relation $A(1/x)\sim xe^{-x}\Ei(x)$; thus,
\begin{align*}
    1-\frac{k}{x}-\sum_{n\geq1}\frac{a_{n}^{(k)}}{x^{n+1}} \ &\sim \  \frac{(k-1)!x^{1-k}}{xe^{-x}\Ei(x)-\sum_{i=0}^{k-2}i!x^{-i}}\nonumber \\
    &= \ \frac{(k-1)!}{x^k e^{-x}\left(\Ei(x)-e^x\sum_{i=0}^{k-2}i!x^{-i-1}\right)}.
\end{align*}
Noting that $\Ei(x)=\text{li}(e^x)$, by Lemma \ref{formula_for_li_k} we have
\begin{align*}
    x^k e^{-x}\left(1-\frac{k}{x}-\sum_{n\geq1}\frac{a_{n}^{(k)}}{x^{n+1}}\right) \ \sim \  \frac{(k-1)!}{\Ei(x)-e^x\sum_{i=0}^{k-2}i!x^{-i-1}} \ = \ \frac{1}{\text{li}_k(e^x)}.
\end{align*}
Thus, 
\begin{align*}
    \frac{1}{\text{li}_k(x)} \ \sim \  \frac{(\log x)^k}{x}\left(1-\frac{k}{\log x}-\sum_{n\geq1}\frac{a_{n}^{(k)}}{(\log x)^{n+1}}\right),
\end{align*}
as desired.
\end{proof}

\section*{Acknowledgements}
We thank Steven J. Miller for his guidance and for recommending trying to generalize the $k=1$ case of Theorem \ref{main_result}.

\bibliography{main}{}
\bibliographystyle{plain}

\end{document}